\documentclass[12pt]{article}

\usepackage{amsmath}
\usepackage{geometry}
\usepackage{amsthm}
\usepackage{mathrsfs}
\usepackage{xcolor}
\usepackage{amssymb}

\newtheorem{theorem}{Theorem}[section]
\newtheorem{definition}[theorem]{Definition}

\newtheorem{proposition}[theorem]{Proposition}
\newtheorem{corollary}[theorem]{Corollary}

\newtheorem{remark}[theorem]{Remark}

\title{Gauge-compatible tensors on statistical manifolds: splitting and submanifold geometry}
\author{
Mirjana Milijevi\'c\\
Faculty of Economics\\
University of Banja Luka\\
\texttt{mirjana.milijevic@ef.unibl.org}
\and
Luis P. Yapu\\
Instituto de Matem\'atica e Estat\'istica \\
Universidade Federal Fluminense\\
and Chair of Dynamics, Control, Machine Learning and Numerics\\
Friedrich-Alexander-Universit\"at Erlangen-N\"urnberg\\
\texttt{luis.yapu@gmail.com}
}
\date{}

\begin{document}

\maketitle
\begin{abstract}
We study statistical manifolds $(M,g,\nabla,\nabla^{*})$ endowed with a nonzero $(1,1)$-tensor field $\Theta$ satisfying the gauge equation
\[
\nabla_X(\Theta Y)=\Theta(\nabla_X^{*}Y).
\]
We first characterize this condition in terms of the statistical difference tensor
\[
K=\nabla-\nabla^{g}.
\]
When $\Theta$ is parallel with respect to the Levi-Civita connection, the gauge equation is equivalent to
\[
K_X\Theta=-\Theta K_X.
\]
We further show that $\Theta$ intertwines the parallel transports of the dual connections. Consequently, its rank is constant on every connected component, and $\ker\Theta$ and $\operatorname{Im}\Theta$ determine smooth integrable distributions. If, in addition,
\[
TM=\ker\Theta\overset{\perp}{\oplus}\operatorname{Im}\Theta,
\]
we establish a local product decomposition of the statistical structure. We then study submanifolds carrying $\Theta$-invariant and $\Theta$-anti-invariant distributions and derive the tangential and normal components of the ambient gauge equation in terms of the second fundamental forms and shape operators of the dual statistical connections. We also obtain curvature-intertwining consequences and present a non-totally-geodesic example illustrating the submanifold identities.
\end{abstract}

\noindent
\textbf{Key words and phrases.}
Statistical manifolds, dual connections, gauge-compatible tensors,
local splitting, $\Theta$-CR submanifolds.

\noindent
\textbf{2020 Mathematics Subject Classification.}
Primary 53C15; Secondary 53B12, 53C40, 53B20.

\section{Introduction}

Statistical manifolds provide a differential-geometric framework for the study of dual affine structures arising in information geometry, affine differential geometry, and related areas. A statistical manifold is a Riemannian manifold $(M,g)$ equipped with a torsion-free affine connection $\nabla$ such that $\nabla g$ is totally symmetric. Equivalently, the statistical structure may be described by the dual pair $(\nabla,\nabla^{*})$, where
\[
Xg(Y,Z)
=
g(\nabla_XY,Z)+g(Y,\nabla_X^{*}Z).
\]
The Levi-Civita connection $\nabla^{g}$ is the midpoint of the dual connections, and one writes
\[
\nabla=\nabla^{g}+K,
\qquad
\nabla^{*}=\nabla^{g}-K,
\]
where the statistical difference tensor $K$ is symmetric and the cubic form
\[
C(X,Y,Z)=g(K_XY,Z)
\]
is totally symmetric. We refer to Amari \cite{amari-85}, Lauritzen \cite{lauritzen-87}, and Noguchi \cite{noguchi-92} for the basic theory of statistical manifolds.

An important problem in statistical geometry is to understand statistical structures that are compatible with an additional geometric tensor. The best-known example is the holomorphic statistical case, in which the underlying Riemannian manifold is K\"{a}hler and the statistical difference tensor is compatible with the complex structure. Related constructions occur for tensors associated with almost contact and coK\"{a}hler geometry. These examples indicate that many compatibility properties do not depend on all the algebraic identities satisfied by a complex or contact structure, but rather on an intertwining relation between a tensor field and the dual affine connections.

Motivated by this observation, we consider a nonzero $(1,1)$-tensor field $\Theta$ satisfying the gauge equation
\begin{equation}\label{eq:gauge-intro}
\nabla_X(\Theta Y)=\Theta(\nabla_X^{*}Y),
\qquad
X,Y\in\Gamma(TM).
\end{equation}
Thus, $\Theta$ is a morphism from the vector bundle with connection $(TM,\nabla^{*})$ to $(TM,\nabla)$. Statistical manifolds endowed with such a tensor will be called
$\Theta$-statistical manifolds. The gauge-compatible tensor viewpoint
was developed in~\cite{milijevic-puechmorel}. In that work, the main
emphasis was on the algebraic and homological structures associated
with solutions of the gauge equation, in particular on
Koszul--Vinberg (pre-Lie) structures, obstruction tensors, and
spectral-sequence degeneracy. Some basic geometric consequences of the
gauge equation, including the constant-rank property and the
integrability of $\ker\Theta$ and $\operatorname{Im}\Theta$, were also
established there.

The aim of the present paper is different. We focus on the
differential-geometric consequences of the gauge equation that arise
directly from the interaction between the dual affine connections and
the tensor $\Theta$. More precisely, we show that $\Theta$ intertwines
the parallel transports of $\nabla^{*}$ and $\nabla$, derive the
corresponding curvature and holonomy relations, and prove a local
statistical product decomposition for Levi-Civita-parallel gauge
tensors whose kernel and image are orthogonal complements. We then
develop a submanifold theory for general gauge-compatible tensors by
deriving the tangential and normal components of the ambient gauge
equation and obtaining integrability and autoparallelness criteria for
the invariant and anti-invariant distributions. Thus, while
\cite{milijevic-puechmorel} is primarily concerned with the algebraic
and homological structures induced by the gauge equation, the present
work is centered on its intrinsic and extrinsic differential geometry.
Our first observation is that the gauge equation admits a useful expression in terms of the statistical difference tensor:
\begin{equation}\label{eq:K-intro}
(\nabla_X^{g}\Theta)Y
+
K_X(\Theta Y)
+
\Theta K_XY
= 0.
\end{equation}
In particular, if $\Theta$ is parallel with respect to the Levi-Civita connection, then \eqref{eq:K-intro} reduces to
\[
K_X\Theta=-\Theta K_X.
\]
This identity describes the interaction between the statistical structure and the algebraic properties of $\Theta$. It recovers the familiar compatibility condition in the complex case and also applies to singular tensors, including projection and coK\"{a}hler-type tensors.

The connection-morphism interpretation of \eqref{eq:gauge-intro} also yields global information along curves. We prove that $\Theta$ intertwines the parallel transports of $\nabla^{*}$ and $\nabla$. It follows that the rank of $\Theta$ is constant on every connected component of $M$. Consequently, $\ker\Theta$ and $\operatorname{Im}\Theta$
are smooth distributions. The gauge equation further implies that $\operatorname{Im}\Theta$ is preserved by $\nabla$, whereas $\ker\Theta$ is preserved by $\nabla^{*}$; since both connections are torsion-free, these distributions are integrable.

For Levi-Civita-parallel tensors, the kernel and image may lead to a stronger geometric decomposition. Assuming that
\[
TM
=
\ker\Theta
\overset{\perp}{\oplus}
\operatorname{Im}\Theta,
\]
we show that both distributions are preserved by $\nabla^{g}$, $\nabla$, and $\nabla^{*}$. Moreover, the mixed components of the statistical difference tensor vanish. The local de Rham decomposition therefore induces a local product decomposition not only of the Riemannian metric but of the complete statistical structure. This gives, in particular, a local splitting result for gauge-compatible coK\"{a}hler statistical structures.

The second part of the paper concerns submanifold geometry. Let $M$ be isometrically immersed in a $\Theta$-statistical manifold $\widetilde M$. Along $M$, we decompose the action of $\Theta$ on tangent vector fields as
\[
\Theta Y=P_{\Theta}Y+F_{\Theta}Y,
\]
where $P_{\Theta}Y$ and $F_{\Theta}Y$ denote the tangential and normal components, respectively. We derive the tangential and normal components of the ambient gauge equation in terms of the second fundamental forms and shape operators associated with the dual statistical connections. These formulas describe the obstruction for the tangential and normal parts of $\Theta$ to satisfy induced gauge-type equations on the submanifold.

We then consider submanifolds whose tangent bundle admits an orthogonal decomposition
\[
TM=D_{\Theta}\oplus D_{\Theta}^{\perp},
\]
where $D_{\Theta}$ is $\Theta$-invariant and $\Theta(D_{\Theta}^{\perp})$ is contained in the normal bundle. This extends the invariant/anti-invariant decomposition appearing in the classical theory of CR-submanifolds of almost Hermitian manifolds; see, for example, \cite{djoric-okumura-09}. The corresponding CR geometry in the statistical setting has been
studied for holomorphic statistical manifolds in
\cite{MilijevicCR,SiddiquiEtAl}, where the interaction between the
invariant and anti-invariant distributions and the dual statistical
connections was investigated. More recently, Levi-flat CR statistical
submanifolds of maximal CR dimension were considered in
\cite{MilijevicMiri}. The present approach differs from these works in
that the ambient complex structure is replaced by an arbitrary
statistical gauge tensor $\Theta$. Thus the invariant/anti-invariant
decomposition is studied independently of the algebraic identity
$\Theta^{2}=-I$ and applies, in particular, to singular
gauge-compatible tensors. The general submanifold identities specialize naturally to the two distributions and relate their induced gauge behavior to the extrinsic geometry of the immersion. A non-totally-geodesic proper CR-type example is included to show that the resulting shape-operator and second-fundamental-form terms need not vanish.

Finally, the gauge equation implies the curvature-intertwining identity
\[
R^{\nabla}(X,Y)\Theta Z
=
\Theta R^{\nabla^{*}}(X,Y)Z,
\]
which is the infinitesimal counterpart of the parallel-transport relation. When $\Theta$ is invertible, the curvature operators and holonomy representations of the dual connections are conjugate through $\Theta$.

The paper is organized as follows. In Section~2 we establish the basic properties of statistical gauge tensors, including the parallel-transport relation, the characterization in terms of the difference tensor, the curvature identity, and the local splitting theorem. Section~3 is devoted to submanifolds of $\Theta$-statistical manifolds and to the induced tangential and normal gauge equations, with particular attention to invariant and anti-invariant distributions. In Section~4 we present examples illustrating both the intrinsic and extrinsic constructions.

\section{Statistical gauge tensors}\label{sec:gauge-tensors}

Let $(M,g,\nabla,\nabla^{*})$ be a statistical manifold. Thus, $\nabla$ and $\nabla^{*}$ are torsion-free affine connections dual with respect to $g$:
\[
Xg(Y,Z)
=
g(\nabla_XY,Z)+g(Y,\nabla_X^{*}Z).
\]
If $\nabla^{g}$ denotes the Levi-Civita connection of $g$, then
\begin{equation}\label{eq:dual-connections}
\nabla=\nabla^{g}+K,
\qquad
\nabla^{*}=\nabla^{g}-K,
\end{equation}
where the statistical difference tensor $K$ satisfies
\begin{equation}\label{eq:K-properties}
K_XY=K_YX,
\qquad
g(K_XY,Z)=g(Y,K_XZ).
\end{equation}
Equivalently, the cubic form
\[
C(X,Y,Z)=g(K_XY,Z)
\]
is totally symmetric.

\begin{definition}\label{def:theta-statistical}
A nonzero $(1,1)$-tensor field $\Theta$ on $M$ is called a statistical gauge tensor if
\begin{equation}\label{eq:gauge}
\nabla_X(\Theta Y)=\Theta(\nabla_X^{*}Y),
\qquad
X,Y\in\Gamma(TM).
\end{equation}
A statistical manifold endowed with such a tensor will be called a
$\Theta$-statistical manifold.
\end{definition}

Equation \eqref{eq:gauge} means that
\[
\Theta:(TM,\nabla^{*})\longrightarrow(TM,\nabla)
\]
is a morphism of vector bundles with connection. This interpretation gives an immediate description of the behavior of $\Theta$ under parallel transport.

\subsection{Parallel transport, rank, and curvature}

Let $\gamma:[a,b]\to M$ be a piecewise smooth curve. For
$s,t\in[a,b]$, let
\[
\mathcal{P}^{\nabla}_{\gamma;s,t}
  :T_{\gamma(s)}M\longrightarrow T_{\gamma(t)}M,
\qquad
\mathcal{P}^{\nabla^{*}}_{\gamma;s,t}
  :T_{\gamma(s)}M\longrightarrow T_{\gamma(t)}M
\]
denote the parallel transport maps along $\gamma$ with respect to
$\nabla$ and $\nabla^{*}$, respectively.

\begin{theorem}\label{thm:parallel-transport}
Let $(M,g,\nabla,\nabla^{*},\Theta)$ be a $\Theta$-statistical
manifold. Then, for every piecewise smooth curve
$\gamma:[a,b]\to M$ and every $s,t\in[a,b]$,
\[
\mathcal{P}^{\nabla}_{\gamma;s,t}\circ\Theta_{\gamma(s)}
=
\Theta_{\gamma(t)}
\circ
\mathcal{P}^{\nabla^{*}}_{\gamma;s,t}.
\]
\end{theorem}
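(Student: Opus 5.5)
The plan is to reduce the identity to uniqueness of solutions of a linear ODE along $\gamma$. First treat a smooth curve $\gamma:[a,b]\to M$ and fix $s\in[a,b]$ and $v\in T_{\gamma(s)}M$.

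Let $V(t)=\mathcal{P}^{\nabla^{*}}_{\gamma;s,t}v$ be the $\nabla^{*}$-parallel field along $\gamma$ with $V(s)=v$. Set $W(t)=\Theta_{\gamma(t)}V(t)$. The key step is to show that $W$ is $\nabla$-parallel along $\gamma$. For this, rewrite the gauge equation \eqref{eq:gauge} as a tensorial identity. Expanding $\nabla_X(\Theta Y)=(\nabla_X\Theta)Y+\Theta\nabla_XY$ shows that \eqref{eq:gauge} is equivalent to
\[
(\nabla_X\Theta)Y=\Theta(\nabla^{*}_XY-\nabla_XY)=-2\,\Theta K_XY .
\]
Both sides are $C^\infty(M)$-linear in $X$ and $Y$, so the identity holds pointwise. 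Consequently it also holds for covariant derivatives along curves, for fields defined only along $\gamma$:
\[
\frac{\nabla}{dt}(\Theta V)=(\nabla_{\dot\gamma}\Theta)V+\Theta\frac{\nabla}{dt}V .
\]
Using $\nabla=\nabla^{*}+2K$ gives $\frac{\nabla}{dt}V=\frac{\nabla^{*}}{dt}V+2K_{\dot\gamma}V$. Substituting,
\[
\frac{\nabla}{dt}W=-2\Theta K_{\dot\gamma}V+\Theta\frac{\nabla^{*}}{dt}V+2\Theta K_{\dot\gamma}V=\Theta\frac{\nabla^{*}}{dt}V=0 .
\]
A cleaner equivalent formulation: the connection on $\operatorname{Hom}(TM,TM)$ given by $D_X\Theta=\nabla_X\circ\Theta-\Theta\circ\nabla^{*}_X$ is tensorial, and the gauge equation says $D\Theta=0$. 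The pullback computation above is then standard.

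Since $W(s)=\Theta_{\gamma(s)}v$ and $W$ is $\nabla$-parallel, uniqueness of parallel transport gives
\[
W(t)=\mathcal{P}^{\nabla}_{\gamma;s,t}\Theta_{\gamma(s)}v .
\]
This is exactly the claimed identity applied to $v$. The argument works for all $t\in[a,b]$, including $t<s$, because parallel transport solves the ODE in both directions.

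For a piecewise smooth $\gamma$, subdivide $[a,b]$ at the break points. Parallel transports compose: $\mathcal{P}_{\gamma;s,t}=\mathcal{P}_{\gamma;u,t}\circ\mathcal{P}_{\gamma;s,u}$. The intertwining relation on each smooth piece then chains together by induction on the number of pieces.

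The only genuinely delicate point is justifying that \eqref{eq:gauge}, stated for global vector fields $X,Y$, can be applied to $\dot\gamma$ and to fields along $\gamma$. I would handle this through the tensorial reformulation $D\Theta=0$ above. Alternatively, one can work in a local frame and note that both sides of \eqref{eq:gauge} depend only on $X_p$ and on the $1$-jet of $Y$ along $X$.
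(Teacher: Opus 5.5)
Your proof is correct and follows the same route as the paper: take the $\nabla^{*}$-parallel field $V$ along $\gamma$ with $V(s)=v$, use the gauge equation to show that $\Theta V$ is $\nabla$-parallel, and conclude by uniqueness of parallel transport. Your tensorial reformulation $(\nabla_X\Theta)Y=-2\,\Theta K_XY$ and your subdivision at the break points only make explicit two things the paper's proof uses tacitly: that the gauge equation applies to fields defined only along $\gamma$, and that the curve is merely piecewise smooth.
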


\begin{proof}
Let $v\in T_{\gamma(s)}M$, and let $Y$ be the $\nabla^{*}$-parallel vector field along $\gamma$ satisfying $Y(s)=v$. By the gauge equation,
\[
\nabla_{\dot\gamma}(\Theta Y)
=
\Theta(\nabla_{\dot\gamma}^{*}Y).
\]
Thus $\Theta Y$ is $\nabla$-parallel. Consequently,
\[
\Theta_{\gamma(t)}
\left(
\mathcal{P}^{\nabla^{*}}_{\gamma;s,t}v
\right)
=
\mathcal{P}^{\nabla}_{\gamma;s,t}
\left(
\Theta_{\gamma(s)}v
\right),
\]
which proves the assertion.
\end{proof}

The constant-rank and integrability properties of gauge-compatible
tensors were established in \cite{milijevic-puechmorel}.

\begin{corollary}
On each connected component of $M$, the tensor $\Theta$ has constant
rank. Moreover, $\operatorname{Im}\Theta$ is preserved by $\nabla$,
$\ker\Theta$ is preserved by $\nabla^{*}$, and both distributions are
integrable.
\end{corollary}

The curvature identity corresponding to Theorem~\ref{thm:parallel-transport} is obtained by applying the gauge equation twice.

\begin{proposition}\label{prop:curvature-intertwining}
Let $(M,g,\nabla,\nabla^{*},\Theta)$ be a $\Theta$-statistical manifold. Then
\[
R^{\nabla}(X,Y)\Theta Z
=
\Theta\,R^{\nabla^{*}}(X,Y)Z,
\]
for all $X,Y,Z\in\Gamma(TM)$.

If $\Theta$ is invertible, then
\[
R^{\nabla}(X,Y)
=
\Theta\circ R^{\nabla^{*}}(X,Y)\circ\Theta^{-1}.
\]

Moreover, for every piecewise smooth loop $\gamma$ based at $p$,
\[
\mathcal{P}^{\nabla}_{\gamma}\circ\Theta_p
=
\Theta_p\circ\mathcal{P}^{\nabla^{*}}_{\gamma}.
\]
Hence, if $\Theta_p$ is invertible,
\[
\operatorname{Hol}_{p}(\nabla)
=
\Theta_p\circ
\operatorname{Hol}_{p}(\nabla^{*})
\circ\Theta_p^{-1}.
\]
\end{proposition}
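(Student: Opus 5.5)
The plan is to prove the curvature identity by a direct computation, applying the gauge equation \eqref{eq:gauge} twice. We use the convention
\[
R^{\nabla}(X,Y)=\nabla_X\nabla_Y-\nabla_Y\nabla_X-\nabla_{[X,Y]},
\]
and the same convention for $\nabla^{*}$.

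For vector fields $X,Y,Z$, first apply \eqref{eq:gauge} with $X$ replaced by $Y$ and $Y$ replaced by $Z$. This gives $\nabla_Y(\Theta Z)=\Theta(\nabla^{*}_YZ)$. Next apply \eqref{eq:gauge} again with the vector field $\nabla^{*}_YZ$ in the second slot:
\[
\nabla_X\nabla_Y(\Theta Z)=\nabla_X\bigl(\Theta\nabla^{*}_YZ\bigr)=\Theta\,\nabla^{*}_X\nabla^{*}_YZ .
\]
By symmetry, $\nabla_Y\nabla_X(\Theta Z)=\Theta\,\nabla^{*}_Y\nabla^{*}_XZ$. One more application of \eqref{eq:gauge}, with $[X,Y]$ in the first slot, gives $\nabla_{[X,Y]}(\Theta Z)=\Theta\nabla^{*}_{[X,Y]}Z$. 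Subtracting these three identities yields
\[
R^{\nabla}(X,Y)\Theta Z=\Theta R^{\nabla^{*}}(X,Y)Z .
\]
Because both sides are tensorial, the identity holds pointwise.

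For the invertible case, note that by the Corollary the rank is constant on components, so invertibility at one point propagates along each component. Wherever $\Theta$ is invertible, substitute $Z=\Theta^{-1}W$ in the identity just proved. This gives $R^{\nabla}(X,Y)W=\Theta R^{\nabla^{*}}(X,Y)\Theta^{-1}W$.

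For the loop statement, apply Theorem~\ref{thm:parallel-transport} to a loop $\gamma:[a,b]\to M$ with $s=a$, $t=b$ and $\gamma(a)=\gamma(b)=p$. Then $\Theta_{\gamma(s)}=\Theta_{\gamma(t)}=\Theta_p$, which gives $\mathcal{P}^{\nabla}_{\gamma}\Theta_p=\Theta_p\mathcal{P}^{\nabla^{*}}_{\gamma}$.

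If $\Theta_p$ is invertible, this becomes $\mathcal{P}^{\nabla}_\gamma=\Theta_p\mathcal{P}^{\nabla^{*}}_\gamma\Theta_p^{-1}$ for every loop $\gamma$ at $p$. Hence $\operatorname{Hol}_p(\nabla)\subseteq\Theta_p\operatorname{Hol}_p(\nabla^{*})\Theta_p^{-1}$. For the reverse inclusion, write any element of $\operatorname{Hol}_p(\nabla^{*})$ as $\mathcal{P}^{\nabla^{*}}_\gamma$ for some loop $\gamma$. Then $\Theta_p\mathcal{P}^{\nabla^{*}}_\gamma\Theta_p^{-1}=\mathcal{P}^{\nabla}_\gamma$ lies in $\operatorname{Hol}_p(\nabla)$, so the two groups coincide.

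None of these steps is a genuine obstacle. The one point needing care is the second application of the gauge equation, which must be made with the vector field $\nabla^{*}_YZ$ in the second slot. This is legitimate because \eqref{eq:gauge} holds for all vector fields.
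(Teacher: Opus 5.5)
Your proposal is correct and follows the paper's proof essentially verbatim. Both use two applications of the gauge equation for the $\nabla_X\nabla_Y$ and $\nabla_Y\nabla_X$ terms and one for the $\nabla_{[X,Y]}$ term, conjugation by $\Theta^{-1}$ in the invertible case, and Theorem~\ref{thm:parallel-transport} specialized to loops for the holonomy statement. Your extra details (tensoriality, the constant-rank remark, and the two-sided inclusion for the holonomy groups) just make explicit what the paper treats as immediate.
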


\begin{proof}
Using the gauge equation, we have
\[
\nabla_Y(\Theta Z)
=
\Theta(\nabla_Y^{*}Z).
\]
Therefore,
\[
\nabla_X\nabla_Y(\Theta Z)
=
\nabla_X\!\left(\Theta(\nabla_Y^{*}Z)\right)
=
\Theta\!\left(\nabla_X^{*}\nabla_Y^{*}Z\right).
\]
Similarly,
\[
\nabla_Y\nabla_X(\Theta Z)
=
\Theta\!\left(\nabla_Y^{*}\nabla_X^{*}Z\right),
\]
and
\[
\nabla_{[X,Y]}(\Theta Z)
=
\Theta\!\left(\nabla_{[X,Y]}^{*}Z\right).
\]
Subtracting these identities gives
\[
R^{\nabla}(X,Y)\Theta Z
=
\Theta\,R^{\nabla^{*}}(X,Y)Z.
\]

If $\Theta$ is invertible, the curvature conjugacy formula follows immediately.
The parallel-transport identity for loops follows from
Theorem~\ref{thm:parallel-transport}, and the holonomy relation follows by
conjugation with $\Theta_p$.
\end{proof}

\subsection{The statistical difference tensor}

The gauge equation can be expressed entirely in terms of the Levi-Civita connection and the statistical difference tensor.

\begin{proposition}\label{prop:K-characterization}
Let $(M,g,\nabla,\nabla^{*})$ be a statistical manifold, with $\nabla$ and $\nabla^{*}$ written as in \eqref{eq:dual-connections}. A $(1,1)$-tensor field $\Theta$ satisfies the gauge equation if and only if
\begin{equation}\label{eq:K-characterization}
(\nabla_X^{g}\Theta)Y
+
K_X(\Theta Y)
+
\Theta K_XY
=
0.
\end{equation}
In particular, if
\[
\nabla^{g}\Theta=0,
\]
then the gauge equation is equivalent to
\begin{equation}\label{eq:anticommutation}
K_X\Theta=-\Theta K_X
\end{equation}
for every $X\in\Gamma(TM)$.
\end{proposition}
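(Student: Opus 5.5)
The plan is to substitute the decomposition \eqref{eq:dual-connections} into both sides of the gauge equation \eqref{eq:gauge} and compare. For the left-hand side, apply $\nabla=\nabla^{g}+K$ to the vector field $\Theta Y$ and expand the Levi-Civita derivative by the Leibniz rule for the $(1,1)$-tensor $\Theta$:
\[
\nabla_X(\Theta Y)=\nabla^{g}_X(\Theta Y)+K_X(\Theta Y)=(\nabla^{g}_X\Theta)Y+\Theta(\nabla^{g}_XY)+K_X(\Theta Y).
\]
For the right-hand side, use $\nabla^{*}=\nabla^{g}-K$ and the $C^\infty(M)$-linearity of $\Theta$ to write
\[
\Theta(\nabla^{*}_XY)=\Theta(\nabla^{g}_XY)-\Theta K_XY.
\]

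Next, subtract the two expressions. The terms $\Theta(\nabla^{g}_XY)$ cancel, and the gauge equation becomes exactly
\[
(\nabla^{g}_X\Theta)Y+K_X(\Theta Y)+\Theta K_XY=0.
\]
Every step is an identity valid for arbitrary $X,Y$, so the argument is reversible and gives the stated equivalence. I will point out that neither the symmetry of $K$ nor its $g$-self-adjointness from \eqref{eq:K-properties} is needed. Only the splitting \eqref{eq:dual-connections} is used.

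For the particular case, I will set $\nabla^{g}\Theta=0$ in \eqref{eq:K-characterization}. This gives $K_X\Theta Y=-\Theta K_XY$ for all $Y$, which is the operator identity \eqref{eq:anticommutation} for each fixed $X$. The converse direction is the same substitution read backwards.

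I do not expect any real obstacle. The only point needing care is to apply the Leibniz rule $\nabla^{g}_X(\Theta Y)=(\nabla^{g}_X\Theta)Y+\Theta\nabla^{g}_XY$ correctly and to keep track of the sign coming from $\nabla^{*}=\nabla^{g}-K$, since this sign is what produces the anticommutation rather than commutation in \eqref{eq:anticommutation}.
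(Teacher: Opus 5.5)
Your proposal is correct and follows the paper's proof essentially verbatim. Like the paper, you expand both sides using $\nabla=\nabla^{g}+K$, $\nabla^{*}=\nabla^{g}-K$ and the Leibniz rule $\nabla^{g}_X(\Theta Y)=(\nabla^{g}_X\Theta)Y+\Theta(\nabla^{g}_XY)$, cancel $\Theta(\nabla^{g}_XY)$, and then set $\nabla^{g}\Theta=0$. Your side remark that neither the symmetry nor the $g$-self-adjointness of $K$ is used is also accurate.
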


\begin{proof}
By \eqref{eq:dual-connections},
\[
\nabla_X(\Theta Y)
=
\nabla_X^{g}(\Theta Y)+K_X(\Theta Y),
\]
whereas
\[
\Theta(\nabla_X^{*}Y)
=
\Theta(\nabla_X^{g}Y)-\Theta K_XY.
\]
Since
\[
\nabla_X^{g}(\Theta Y)
=
(\nabla_X^{g}\Theta)Y+\Theta(\nabla_X^{g}Y),
\]
the gauge equation is equivalent to \eqref{eq:K-characterization}. If
$\nabla^{g}\Theta=0$, this reduces to \eqref{eq:anticommutation}.
\end{proof}

We call a $\Theta$-statistical manifold \emph{Levi-Civita parallel} when
$\nabla^{g}\Theta=0$. In this case, the algebraic properties of $\Theta$ impose corresponding restrictions on $K$. For example, when $\Theta^{2}=-I$, each endomorphism $K_X$ is anti-complex-linear. When $\Theta^{2}=I$, $K_X$ exchanges the two eigendistributions of $\Theta$. If $\Theta^{2}=\Theta$, then
\[
K_X(\operatorname{Im}\Theta)=0,
\qquad
K_X(\ker\Theta)\subset\ker\Theta.
\]

\subsection{A local splitting theorem}

We now consider Levi-Civita-parallel gauge tensors for which the kernel and image are orthogonal complements.

\begin{theorem}\label{thm:local-splitting}
Let $(M,g,\nabla,\nabla^{*},\Theta)$ be a Levi-Civita-parallel
$\Theta$-statistical manifold. Assume that the tangent bundle admits the orthogonal decomposition
\[
TM=\ker\Theta\oplus\operatorname{Im}\Theta.
\]
Then both distributions are preserved by
\[
\nabla^{g},\ \nabla,\ \nabla^{*},
\]
and
\begin{equation}\label{eq:mixed-K-vanishing}
K_XY=0
\end{equation}
whenever one of $X,Y$ belongs to $\ker\Theta$ and the other belongs to
$\operatorname{Im}\Theta$.

Consequently, every point of $M$ admits a neighborhood $U$ for which
\[
(U,g,\nabla,\nabla^{*})
\cong
(U_0,g_0,\nabla^{0},(\nabla^{0})^{*})
\times
(U_1,g_1,\nabla^{1},(\nabla^{1})^{*}),
\]
where
\[
TU_0=\ker\Theta,
\qquad
TU_1=\operatorname{Im}\Theta.
\]
Thus the two local factors inherit statistical structures.
\end{theorem}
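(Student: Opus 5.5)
Set $D_0=\ker\Theta$ and $D_1=\operatorname{Im}\Theta$; by the Corollary these are smooth distributions of constant rank on each connected component. The first step is to show that both are $\nabla^{g}$-parallel. Since $\nabla^{g}\Theta=0$, for $Y\in\Gamma(D_0)$ we get $\Theta(\nabla^g_XY)=\nabla^g_X(\Theta Y)-(\nabla^g_X\Theta)Y=0$, so $\nabla^g_XY\in D_0$. Similarly, $\nabla^g_X(\Theta Z)=\Theta(\nabla^g_XZ)\in D_1$. Hence both distributions are $\nabla^{g}$-parallel; for $D_1$ one could also invoke orthogonality, since a metric connection preserving $D_0$ preserves $D_0^{\perp}=D_1$.

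The second step is the vanishing of the mixed components of $K$. By Proposition~\ref{prop:K-characterization} we have $K_X\Theta=-\Theta K_X$. This implies $K_X(D_0)\subset D_0$, because $\Theta K_XY=-K_X\Theta Y=0$ for $Y\in D_0$. It also implies $K_X(D_1)\subset D_1$, because $K_X\Theta Z=-\Theta K_XZ\in D_1$. Now take $X\in D_0$ and $Y\in D_1$.
\begin{itemize}
\item By the symmetry $K_XY=K_YX$, the vector $K_XY=K_X Y$ lies in $D_1$, since $K_X$ preserves $D_1$.
\item At the same time $K_YX$ lies in $D_0$, since $K_Y$ preserves $D_0$.
\end{itemize}
Thus $K_XY\in D_0\cap D_1=0$, which proves \eqref{eq:mixed-K-vanishing}. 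Combining this with $\nabla=\nabla^g+K$ and $\nabla^*=\nabla^g-K$, and with the invariance of each $D_i$ under every $K_X$, shows that $\nabla$ and $\nabla^{*}$ preserve both distributions. This is consistent with the Corollary, which gave only one distribution for each connection.

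The third step is the splitting. The distributions $D_0,D_1$ are complementary, orthogonal and $\nabla^g$-parallel. By the local de Rham decomposition, each point has a neighborhood $U\cong U_0\times U_1$ with $g=g_0+g_1$, $TU_0=D_0$ and $TU_1=D_1$. It remains to see that $K$ splits. The leaves of $D_i$ are $\nabla^g$-totally geodesic, so $\nabla^g$ is the product of the Levi-Civita connections $\nabla^{g_i}$. Since $K$ has no mixed components and $K_X(D_i)\subset D_i$, we can write $K=K^0\oplus K^1$ with $K^i$ defined on $D_i$.

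The main obstacle is showing that $K^i$ depends only on the coordinates of the factor $U_i$; this is needed so that $\nabla^i=\nabla^{g_i}+K^i$ is a genuine connection on $U_i$. I would obtain this from the Codazzi-type identity: the total symmetry of $\nabla C$ holds because $\nabla g=-2C$ up to convention and $\nabla$ is torsion-free, giving $(\nabla^g_XC)(Y,Z,W)=(\nabla^g_YC)(X,Z,W)$. Take $X$ tangent to $U_1$ and $Y,Z,W$ tangent to $U_0$, using product coordinate fields, which are $\nabla^g$-parallel in the transverse directions. The right-hand side involves $C$ with one $D_1$ slot and otherwise $D_0$ slots, together with $\nabla^g_Y$ preserving the splitting, so it vanishes by the mixed vanishing. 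Hence $X\,C(Y,Z,W)=0$, so $K^0$ is constant along $U_1$, and symmetrically $K^1$ is constant along $U_0$.

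If this Codazzi identity is not available in the paper's convention (it holds precisely because $\nabla$ is torsion-free and $\nabla g$ is symmetric), I would instead verify it directly from $R^\nabla$-type torsion-freeness. Each $(g_i,\nabla^i)$ then satisfies the statistical axioms, since $C^i$ is the restriction of a totally symmetric form, and the isomorphism of statistical structures follows.
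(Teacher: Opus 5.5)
Your first two steps are correct and match the paper's argument. The parallel $\Theta$ makes $\ker\Theta$ and $\operatorname{Im}\Theta$ $\nabla^g$-parallel, and $K_X\Theta=-\Theta K_X$ makes every $K_X$ preserve both distributions. The symmetry $K_XY=K_YX$ then kills the mixed components, and $\nabla=\nabla^g\pm K$ preserve both distributions. The gap is in your third step. Your Codazzi-type identity does not follow from the statistical axioms. Total symmetry of $\nabla g=-2C$ is a first-order condition: it says only that $C$ is totally symmetric, and $C$ may be an arbitrary totally symmetric tensor field (compare the arbitrary functions $A,B$ in Section~4). Expanding $\nabla=\nabla^g+K$ gives
\[
(\nabla^g_XC)(Y,Z,W)-(\nabla^g_YC)(X,Z,W)=\tfrac12\,g\bigl((R^{\nabla}-R^{\nabla^{*}})(X,Y)Z,W\bigr).
\]
So your identity holds exactly when $R^{\nabla}=R^{\nabla^{*}}$ (conjugate symmetry). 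That is not assumed, and the gauge equation only gives $R^{\nabla}(X,Y)\Theta=\Theta R^{\nabla^{*}}(X,Y)$. No ``direct verification'' can replace it.

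The obstacle you isolated is a genuine obstruction: the product conclusion is false as stated, so this step cannot be repaired. Take $\mathbb{R}^{3}$ with coordinates $(t,x,y)$ and the flat metric, and set $\Theta\partial_t=0$, $\Theta\partial_x=\partial_y$, $\Theta\partial_y=-\partial_x$ (a coK\"ahler tensor). Let $K_{\partial_x}\partial_x=t\,\partial_x$, $K_{\partial_x}\partial_y=K_{\partial_y}\partial_x=-t\,\partial_y$, $K_{\partial_y}\partial_y=-t\,\partial_x$, with all components involving $\partial_t$ zero. This is the Section~4 surface construction with $A=t$, $B=0$. The only nonzero components of $C$ are $C(\partial_x,\partial_x,\partial_x)=t$ and the permutations of $C(\partial_x,\partial_y,\partial_y)=-t$, so $C$ is totally symmetric. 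Also $\nabla^g\Theta=0$, $K_X\Theta=-\Theta K_X$, and $\ker\Theta\perp\operatorname{Im}\Theta$ span $TM$. Any isomorphism with $TU_0=\ker\Theta$, $TU_1=\operatorname{Im}\Theta$ must have the form $(s,p)\mapsto(\tau(s),\chi(p))$. But the statistical structure induced on the leaf $\{t\}\times\mathbb{R}^{2}$ varies with $t$, so $\nabla$ is not a product connection. Accordingly, $(\nabla^g_{\partial_t}C)(\partial_x,\partial_x,\partial_x)=1\neq0=(\nabla^g_{\partial_x}C)(\partial_t,\partial_x,\partial_x)$. The paper's proof has the same gap: it asserts without argument that invariance of the distributions plus vanishing of mixed $K$ makes $\nabla,\nabla^{*}$ product connections. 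The coK\"ahler corollary also fails for this example. What does hold in general is weaker: each leaf of either distribution is totally geodesic for $\nabla$ and $\nabla^{*}$ and inherits a statistical structure, which may vary transversally. Under the extra hypothesis $R^{\nabla}=R^{\nabla^{*}}$, your Codazzi computation goes through verbatim and gives the local statistical product.
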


\begin{proof}
Let $Y\in\Gamma(\ker\Theta)$. Since $\nabla^{g}\Theta=0$,
\[
\Theta(\nabla_X^{g}Y)
=
\nabla_X^{g}(\Theta Y)
,
\]
so $\ker\Theta$ is preserved by $\nabla^{g}$. If
$Y\in\Gamma(\operatorname{Im}\Theta)$, write locally $Y=\Theta Z$. Then
\[
\nabla_X^{g}Y
=
\nabla_X^{g}(\Theta Z)=
\Theta(\nabla_X^{g}Z),
\]
and therefore $\operatorname{Im}\Theta$ is also preserved by $\nabla^{g}$.

The relation \eqref{eq:anticommutation} shows that $K$ preserves both distributions in its second argument. Indeed, if $Y\in\ker\Theta$, then
\[
0=K_X(\Theta Y)=-\Theta K_XY,
\]
and hence $K_XY\in\ker\Theta$. If $Y=\Theta Z\in\operatorname{Im}\Theta$, then
\[
K_XY
=
K_X(\Theta Z)=
-\Theta K_XZ
\in
\operatorname{Im}\Theta.
\]
It follows from \eqref{eq:dual-connections} that both distributions are preserved by $\nabla$ and $\nabla^{*}$.

Now let
\[
Y\in\Gamma(\ker\Theta),
\qquad
Z\in\Gamma(\operatorname{Im}\Theta).
\]
The preceding invariance properties imply
\[
K_YZ\in\Gamma(\operatorname{Im}\Theta).
\]
On the other hand, by the symmetry of $K$,
\[
K_YZ=K_ZY\in\Gamma(\ker\Theta).
\]
Therefore
\[
K_YZ=0.
\]

The two distributions are orthogonal and $\nabla^{g}$-parallel. The local de Rham decomposition theorem (\cite{KN-69}) therefore gives a local Riemannian product
\[
(U,g)\cong(U_0,g_0)\times(U_1,g_1).
\]
The preservation of the two distributions and the vanishing of the mixed components of $K$ show that $\nabla$ and $\nabla^{*}$ are product connections. Their restrictions to the two factors are torsion-free and dual with respect to the induced metrics, and therefore define statistical structures.
\end{proof}

The splitting theorem applies naturally to coK\"{a}hler statistical structures.

\begin{corollary}\label{cor:cokahler-splitting}
Let $(M^{2n+1},\phi,\xi,\eta,g)$ be a coK\"{a}hler manifold, and suppose that
$(M,g,\nabla,\nabla^{*},\phi)$ is a $\phi$-statistical manifold. Then $M$ is locally a statistical product
\[
(I,dt^{2},\nabla^{I},(\nabla^{I})^{*})
\times
(N^{2n},g_N,\nabla^{N},(\nabla^{N})^{*}),
\]
where
\[
TI=\ker\phi=\langle\xi\rangle,
\qquad
TN=\operatorname{Im}\phi=\ker\eta.
\]
Moreover, $J=\phi|_{TN}$ is a parallel complex structure and the induced statistical structure on $N$ is holomorphic.
\end{corollary}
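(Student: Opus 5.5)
The plan is to reduce Corollary~\ref{cor:cokahler-splitting} to Theorem~\ref{thm:local-splitting} with $\Theta=\phi$. First I will verify its hypotheses.

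\emph{Levi-Civita parallelism.} For a coK\"ahler manifold one has $\nabla^{g}\phi=0$ and $\nabla^{g}\eta=0$, hence also $\nabla^{g}\xi=0$. So the given $\phi$-statistical manifold is Levi-Civita parallel. By Proposition~\ref{prop:K-characterization}, the gauge equation is then equivalent to $K_X\phi=-\phi K_X$.

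\emph{Kernel, image and orthogonality.} From the almost contact identities $\phi^{2}=-I+\eta\otimes\xi$, $\phi\xi=0$ and $\eta\circ\phi=0$, we get $\ker\phi=\langle\xi\rangle$ and $\operatorname{Im}\phi\subset\ker\eta$. Since $\phi$ restricted to $\ker\eta$ squares to $-I$, it is invertible there, so $\operatorname{Im}\phi=\ker\eta$. Compatibility of the metric gives $g(\xi,X)=\eta(X)$. Hence $\ker\eta=\xi^{\perp}$, and $TM=\langle\xi\rangle\oplus\ker\eta$ is an orthogonal decomposition.

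Theorem~\ref{thm:local-splitting} now yields, near each point, a statistical product $(U_0,g_0,\nabla^{0},(\nabla^{0})^{*})\times(U_1,g_1,\nabla^{1},(\nabla^{1})^{*})$ with $TU_0=\langle\xi\rangle$ and $TU_1=\ker\eta$. Two identifications remain.

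\emph{The one-dimensional factor.} $U_0$ is an interval, and $\xi$ is a $\nabla^{g}$-parallel unit field tangent to it. Taking $t$ to be arc length along $\xi$ gives $g_0=dt^{2}$, and the induced structure is a statistical structure on $I$. On a line, such a structure is determined by the single function $C(\xi,\xi,\xi)$. I expect this to vanish: $K_\xi\xi\in\langle\xi\rangle$, and applying the anticommutation to $\xi$ gives $\phi K_\xi\xi=-K_\xi\phi\xi=0$, which is consistent. Vanishing is not needed for the statement, however; we only need some statistical structure $(\nabla^{I},(\nabla^{I})^{*})$ on $I$.

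\emph{The factor $N$.} Set $J=\phi|_{TN}$. Then $J^{2}=-I$ on $\ker\eta$. The metric compatibility $g(\phi X,\phi Y)=g(X,Y)-\eta(X)\eta(Y)$ shows $J$ is $g_N$-orthogonal. Parallelism of $J$ for the Levi-Civita connection of $g_N$ follows from $\nabla^{g}\phi=0$, because the product Levi-Civita connection restricts to the factors. Hence $(N,g_N,J)$ is K\"ahler.

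For holomorphicity, the mixed $K$ vanishes and $K$ preserves $\ker\eta$, so the difference tensor of $N$ is $K^{N}=K|_{TN\times TN}$. The relation $K_X\phi=-\phi K_X$ then restricts to $K^{N}_XJ=-JK^{N}_X$ for $X\in TN$. This is the holomorphic statistical condition in the form ``$K_X$ anti-complex-linear'' noted after Proposition~\ref{prop:K-characterization}. Equivalently, $\nabla^{N}$ satisfies the $J$-gauge equation.

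The main obstacle is bookkeeping rather than an analytic difficulty. One must check that the induced objects on $N$ really coincide with the restrictions: the factor Levi-Civita connection, the factor difference tensor, and the restriction of $\phi$ to the factor (which is independent of the $I$-coordinate, since $\phi$ is parallel). I would handle this by working in adapted product coordinates $(t,x)$. There $\xi=\partial_t$ and $\phi$ has block form $0\oplus J(x)$, and the vanishing of the mixed Christoffel symbols follows from Theorem~\ref{thm:local-splitting}.
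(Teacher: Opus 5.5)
Your route is the paper's route. You verify $\nabla^{g}\phi=0$, identify $\ker\phi=\langle\xi\rangle$ and $\operatorname{Im}\phi=\ker\eta$ as orthogonal complements, invoke Theorem~\ref{thm:local-splitting}, and restrict $K_X\phi=-\phi K_X$ to $\ker\eta$. The algebraic checks you add are correct, and so is the Kähler property of $(N,g_N,J)$. The gap is in the step you call bookkeeping, which is the same unjustified step that ends the paper's proof of Theorem~\ref{thm:local-splitting}. Invariance of both distributions under $\nabla,\nabla^{*}$, together with vanishing of the mixed components of $K$, only makes the Christoffel symbols block diagonal in adapted coordinates $(t,x)$. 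The surviving symbols are $\Gamma^{t}_{tt}=C(\xi,\xi,\xi)$ and $\Gamma^{k}_{ij}$ (the Levi-Civita symbols of $g_N$ plus $K^{k}_{ij}$), and these may depend on \emph{all} coordinates. A product connection additionally needs $\partial_tK^{k}_{ij}=0$ and $\partial_{x^i}C(\xi,\xi,\xi)=0$. You checked the analogous $t$-independence for $\phi$, where it follows from parallelism, but nothing gives it for $K$. Since $\nabla^{g}\phi=0$, the gauge equation is the pointwise algebraic condition $K_X\phi=-\phi K_X$. A statistical structure on $(M,g)$ is just an arbitrary totally symmetric cubic form, so there is no differential constraint on $K$ at all.

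For a counterexample, take flat $\mathbb{R}_t\times\mathbb{R}^{2}_{x,y}$ with $\xi=\partial_t$, $\eta=dt$, $\phi\partial_t=0$, $\phi\partial_x=\partial_y$, $\phi\partial_y=-\partial_x$.
\begin{itemize}
\item For any function $c$, the tensor $K_XY=c\,\eta(X)\eta(Y)\xi$ satisfies $K_X\phi=-\phi K_X$, since both sides vanish because $\eta\circ\phi=0$ and $\phi\xi=0$. This also shows your expectation $C(\xi,\xi,\xi)=0$ is false.
\item The surface tensor \eqref{eq:surface-K} with $U=\partial_x$, $V=\partial_y$ (so $u=dx$, $v=dy$) is likewise admissible for arbitrary functions $A,B$ of $(t,x,y)$.
\item With $c=x$ one gets $\nabla_{\xi}\xi=x\,\xi$. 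With $A=t$, $B=0$ one gets $\nabla_{\partial_x}\partial_x=t\,\partial_x$.
\end{itemize}
Any local product structure with $TI=\ker\phi$ and $TN=\ker\eta$ identifies the $\xi$-lines with one another by the value of $t$. It identifies the slices $\{t\}\times\mathbb{R}^{2}$ with one another by the value of $(x,y)$. Hence neither example is a statistical product $I\times N$.

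What your argument actually proves is leafwise. The metric splits, and the leaves of $\ker\phi$ and $\ker\eta$ are autoparallel for $\nabla$ and $\nabla^{*}$. Each leaf of $\ker\eta$ is holomorphic statistical for the parallel complex structure $J$, but the induced structures can vary from leaf to leaf. To reach the product as stated you must add a hypothesis. One that suffices is $(\nabla^{g}_{\xi}K)_YZ=0$ for $Y,Z\in\ker\eta$, together with $X\bigl(C(\xi,\xi,\xi)\bigr)=0$ for $X\in\ker\eta$.
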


\begin{proof}
On a coK\"{a}hler manifold,
\[
\nabla^{g}\phi=0,
\qquad
\ker\phi=\langle\xi\rangle,
\qquad
\operatorname{Im}\phi=\ker\eta,
\]
and these distributions are orthogonal complements. Theorem~\ref{thm:local-splitting} therefore applies. On the even-dimensional factor,
\[
J^{2}=-I,
\qquad
\nabla^{g_N}J=0,
\]
and the restriction of \eqref{eq:anticommutation} gives
\[
K_XJY=-JK_XY.
\]
Hence the induced structure on $N$ is holomorphic statistical.
\end{proof}
\section{Submanifold geometry}\label{sec:submanifolds}

Let
\[
(\widetilde M,\widetilde g,
 \widetilde\nabla,\widetilde\nabla^{*},\Theta)
\]
be a $\Theta$-statistical manifold and let
\[
M\hookrightarrow \widetilde M
\]
be an isometrically immersed submanifold. We denote by $g$ the induced
metric and by $\nabla,\nabla^{*}$ the affine connections induced on $M$.
For tangent vector fields $X,Y\in\Gamma(TM)$ and a normal vector field
$\nu\in\Gamma(T^{\perp}M)$, the statistical Gauss--Weingarten formulas are (see, for example, \cite{Vos-98})
\begin{align}
\widetilde\nabla_XY
 &=
 \nabla_XY+h(X,Y),                                      \label{eq:Gauss}\\
\widetilde\nabla^{*}_XY
 &=
 \nabla^{*}_XY+h^{*}(X,Y),                              \label{eq:Gauss-dual}\\
\widetilde\nabla_X\nu
 &=
 -A_{\nu}X+\nabla^{\perp}_X\nu.                         \label{eq:Weingarten}
\end{align}
Here $h$ and $h^{*}$ are the second fundamental forms associated with
$\widetilde\nabla$ and $\widetilde\nabla^{*}$, respectively.

For completeness, the dual Weingarten formula is
\[
\widetilde\nabla^{*}_{X}\nu
   =-A^{*}_{\nu}X+\nabla^{*\perp}_{X}\nu.
\]
By the duality of $\widetilde\nabla$ and
$\widetilde\nabla^{*}$,
\[
g(A_{\nu}X,Y)
   =\widetilde g(h^{*}(X,Y),\nu),
\qquad
g(A^{*}_{\nu}X,Y)
   =\widetilde g(h(X,Y),\nu).
\]

Along $M$, decompose $\Theta$ as
\begin{equation}\label{eq:Theta-tangent}
\Theta Y=P_{\Theta}Y+F_{\Theta}Y,
\qquad
P_{\Theta}Y\in\Gamma(TM),\quad
F_{\Theta}Y\in\Gamma(T^{\perp}M),
\end{equation}
for $Y\in\Gamma(TM)$, and
\begin{equation}\label{eq:Theta-normal}
\Theta\nu=t_{\Theta}\nu+f_{\Theta}\nu,
\qquad
t_{\Theta}\nu\in\Gamma(TM),\quad
f_{\Theta}\nu\in\Gamma(T^{\perp}M),
\end{equation}
for $\nu\in\Gamma(T^{\perp}M)$.

Applying the ambient gauge equation
\[
\widetilde\nabla_X(\Theta Y)
=
\Theta(\widetilde\nabla_X^{*}Y)
\]
and separating tangent and normal components gives the following
identities.

\begin{proposition}\label{prop:submanifold-gauge}
For every $X,Y\in\Gamma(TM)$,
\begin{align}
\nabla_X(P_{\Theta}Y)
-A_{F_{\Theta}Y}X
&=
P_{\Theta}(\nabla_X^{*}Y)
+t_{\Theta}(h^{*}(X,Y)),                 \label{eq:tangential-gauge}\\
h(X,P_{\Theta}Y)
+\nabla_X^{\perp}(F_{\Theta}Y)
&=
F_{\Theta}(\nabla_X^{*}Y)
+f_{\Theta}(h^{*}(X,Y)).                 \label{eq:normal-gauge}
\end{align}
\end{proposition}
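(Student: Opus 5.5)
The plan is to evaluate both sides of the ambient gauge equation $\widetilde\nabla_X(\Theta Y)=\Theta(\widetilde\nabla^{*}_XY)$ for $X,Y\in\Gamma(TM)$ using the decompositions \eqref{eq:Theta-tangent} and \eqref{eq:Theta-normal} together with the Gauss--Weingarten formulas, and then compare tangential and normal parts. Since $TM\oplus T^{\perp}M$ is a direct sum along $M$, equality of the two sides is equivalent to the simultaneous equality of their tangential and normal components. These two equalities will be exactly \eqref{eq:tangential-gauge} and \eqref{eq:normal-gauge}.

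For the left-hand side, write $\Theta Y=P_{\Theta}Y+F_{\Theta}Y$. Apply \eqref{eq:Gauss} to the tangent field $P_{\Theta}Y$:
\[
\widetilde\nabla_X(P_{\Theta}Y)=\nabla_X(P_{\Theta}Y)+h(X,P_{\Theta}Y).
\]
Apply \eqref{eq:Weingarten} to the normal field $\nu=F_{\Theta}Y$:
\[
\widetilde\nabla_X(F_{\Theta}Y)=-A_{F_{\Theta}Y}X+\nabla^{\perp}_X(F_{\Theta}Y).
\]
Adding these two identities, the tangential part of the left-hand side is $\nabla_X(P_{\Theta}Y)-A_{F_{\Theta}Y}X$. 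Its normal part is $h(X,P_{\Theta}Y)+\nabla^{\perp}_X(F_{\Theta}Y)$.

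For the right-hand side, use \eqref{eq:Gauss-dual} to write $\widetilde\nabla^{*}_XY=\nabla^{*}_XY+h^{*}(X,Y)$ and expand by linearity of $\Theta$. Applying \eqref{eq:Theta-tangent} to the tangent vector $\nabla^{*}_XY$ gives $P_{\Theta}(\nabla^{*}_XY)+F_{\Theta}(\nabla^{*}_XY)$. Applying \eqref{eq:Theta-normal} to the normal vector $h^{*}(X,Y)$ gives $t_{\Theta}(h^{*}(X,Y))+f_{\Theta}(h^{*}(X,Y))$. The tangential part of the right-hand side is therefore $P_{\Theta}(\nabla^{*}_XY)+t_{\Theta}(h^{*}(X,Y))$. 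Its normal part is $F_{\Theta}(\nabla^{*}_XY)+f_{\Theta}(h^{*}(X,Y))$.

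Equating the tangential parts gives \eqref{eq:tangential-gauge}, and equating the normal parts gives \eqref{eq:normal-gauge}. The argument has no real obstacle; the only point needing care is bookkeeping. One must use the $\widetilde\nabla$-Weingarten operator $A$ and normal connection $\nabla^{\perp}$ on the left-hand side, because the left side involves $\widetilde\nabla$. One must use the dual second fundamental form $h^{*}$ on the right-hand side, because that side involves $\widetilde\nabla^{*}$. In particular, the dual Weingarten formula is not needed at this stage.
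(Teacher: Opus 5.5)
Your proposal is correct and follows the paper's proof essentially verbatim. You expand $\widetilde\nabla_X(\Theta Y)$ using the Gauss formula on $P_{\Theta}Y$ and the Weingarten formula on $F_{\Theta}Y$, expand $\Theta(\widetilde\nabla^{*}_XY)$ using the dual Gauss formula together with the decompositions of $\Theta$ on tangent and normal vectors, and then compare tangential and normal components (and, as you note, the dual Weingarten formula is indeed not needed).
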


\begin{proof}
Using \eqref{eq:Theta-tangent} and the Gauss--Weingarten formulas,
\[
\widetilde\nabla_X(\Theta Y)
=
\nabla_X(P_{\Theta}Y)
+h(X,P_{\Theta}Y)
-A_{F_{\Theta}Y}X
+\nabla_X^{\perp}(F_{\Theta}Y).
\]
On the other hand, by \eqref{eq:Gauss-dual},
\[
\Theta(\widetilde\nabla_X^{*}Y)
=
\Theta(\nabla_X^{*}Y)
+\Theta(h^{*}(X,Y)).
\]
Using \eqref{eq:Theta-tangent} and \eqref{eq:Theta-normal} on the
right-hand side and comparing tangent and normal components gives
\eqref{eq:tangential-gauge} and \eqref{eq:normal-gauge}.
\end{proof}

The identities above hold for every submanifold of a
$\Theta$-statistical manifold. We now specialize them to submanifolds
carrying invariant and anti-invariant tangent distributions. For the coK\"ahler case, related obstruction formulas were considered
in~\cite{milijevic-puechmorel}. Here we do not impose the vanishing of
those obstructions; instead, we study the invariant and anti-invariant
distributions for a general statistical gauge tensor and characterize
their integrability and autoparallelness in terms of the extrinsic
geometry of the immersion.

\begin{definition}\label{def:Theta-CR}
A submanifold $M$ of a $\Theta$-statistical manifold is called a
$\Theta$-\emph{CR submanifold} if its tangent bundle admits an
orthogonal decomposition
\begin{equation}\label{eq:Theta-CR-splitting}
TM=D_{\Theta}\oplus D_{\Theta}^{\perp}
\end{equation}
such that
\[
\Theta(D_{\Theta})\subset D_{\Theta},
\qquad
\Theta(D_{\Theta}^{\perp})\subset T^{\perp}M.
\]
The distributions $D_{\Theta}$ and $D_{\Theta}^{\perp}$ are called,
respectively, the invariant and anti-invariant distributions.
The submanifold is called proper when both distributions are nonzero.
\end{definition}
\begin{remark}
The terminology used here is deliberately geometric. In the present
paper, a $\Theta$-CR submanifold is defined by the orthogonal
invariant/anti-invariant decomposition \eqref{eq:Theta-CR-splitting}
alone. This is closer to the classical notion of a CR submanifold.
In~\cite{milijevic-puechmorel}, a more restrictive gauge-compatible
notion was considered, in which additional tangential and normal
obstruction tensors were required to vanish. The latter class is
therefore a distinguished gauge-flat subclass of the $\Theta$-CR
submanifolds considered here.
\end{remark}

Thus,
\begin{equation}\label{eq:CR-components}
Y\in D_{\Theta}
\quad\Longrightarrow\quad
P_{\Theta}Y=\Theta Y,\quad F_{\Theta}Y=0,
\end{equation}
whereas
\begin{equation}\label{eq:CR-components-perp}
U\in D_{\Theta}^{\perp}
\quad\Longrightarrow\quad
P_{\Theta}U=0,\quad F_{\Theta}U=\Theta U.
\end{equation}

The next result gives intrinsic--extrinsic identities governing the
integrability of the two distributions.

\begin{theorem}\label{thm:CR-integrability}
Let $M$ be a $\Theta$-CR submanifold of a $\Theta$-statistical
manifold.

\begin{enumerate}
\item For $X,Y\in\Gamma(D_{\Theta})$,
\begin{equation}\label{eq:D-integrability}
F_{\Theta}([X,Y])
=
h(X,\Theta Y)-h(Y,\Theta X).
\end{equation}

\item For $U,V\in\Gamma(D_{\Theta}^{\perp})$,
\begin{equation}\label{eq:Dperp-integrability}
P_{\Theta}([U,V])
=
A_{\Theta U}V-A_{\Theta V}U.
\end{equation}
\end{enumerate}

Consequently:

\begin{enumerate}
\item[(a)] if
\[
\ker(F_{\Theta}|_{TM})=D_{\Theta},
\]
then $D_{\Theta}$ is integrable if and only if
\begin{equation}\label{eq:D-integrability-condition}
h(X,\Theta Y)=h(Y,\Theta X)
\end{equation}
for all $X,Y\in\Gamma(D_{\Theta})$;

\item[(b)] if
\[
\ker(P_{\Theta}|_{TM})=D_{\Theta}^{\perp},
\]
then $D_{\Theta}^{\perp}$ is integrable if and only if
\begin{equation}\label{eq:Dperp-integrability-condition}
A_{\Theta U}V=A_{\Theta V}U
\end{equation}
for all $U,V\in\Gamma(D_{\Theta}^{\perp})$.
\end{enumerate}
\end{theorem}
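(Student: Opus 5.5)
The plan is to obtain both identities directly from Proposition~\ref{prop:submanifold-gauge}: restrict to the relevant distribution, antisymmetrize in the two arguments, and use torsion-freeness of the induced connection $\nabla^{*}$ together with symmetry of $h^{*}$. First I would record that $h^{*}$ is symmetric. Indeed, $\widetilde\nabla^{*}$ is torsion-free and $[X,Y]$ is tangent, so the normal part of $\widetilde\nabla^{*}_XY-\widetilde\nabla^{*}_YX=[X,Y]$ vanishes. The tangential part gives $\nabla^{*}_XY-\nabla^{*}_YX=[X,Y]$, i.e.\ $\nabla^{*}$ is torsion-free.

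For part 1, take $X,Y\in\Gamma(D_\Theta)$. By \eqref{eq:CR-components} we have $F_\Theta Y=0$ and $P_\Theta Y=\Theta Y$. Then \eqref{eq:normal-gauge} reads
\[
h(X,\Theta Y)=F_\Theta(\nabla^{*}_XY)+f_\Theta(h^{*}(X,Y)).
\]
I will write the same identity with $X$ and $Y$ interchanged and subtract. The $f_\Theta$-terms cancel by symmetry of $h^{*}$. Since $F_\Theta$ is tensorial and linear, the remaining terms combine to $F_\Theta(\nabla^{*}_XY-\nabla^{*}_YX)=F_\Theta([X,Y])$, which is \eqref{eq:D-integrability}.

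For part 2, take $U,V\in\Gamma(D_\Theta^{\perp})$. By \eqref{eq:CR-components-perp} we have $P_\Theta V=0$ and $F_\Theta V=\Theta V$. Then \eqref{eq:tangential-gauge} gives
\[
-A_{\Theta V}U=P_\Theta(\nabla^{*}_UV)+t_\Theta(h^{*}(U,V)).
\]
Interchanging $U,V$ and subtracting, the $t_\Theta$-terms cancel, again by symmetry of $h^{*}$. What remains is $A_{\Theta U}V-A_{\Theta V}U=P_\Theta([U,V])$, which is \eqref{eq:Dperp-integrability}. No symmetry of $A$ is needed here, since only the difference appears.

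For the consequences, $D_\Theta$ is integrable iff $[X,Y]\in\Gamma(D_\Theta)$ for all $X,Y\in\Gamma(D_\Theta)$. Under the hypothesis $\ker(F_\Theta|_{TM})=D_\Theta$, this holds iff $F_\Theta([X,Y])=0$, which by \eqref{eq:D-integrability} is \eqref{eq:D-integrability-condition}. Similarly, $D_\Theta^{\perp}$ is integrable iff $P_\Theta([U,V])=0$ under $\ker(P_\Theta|_{TM})=D_\Theta^{\perp}$, which by \eqref{eq:Dperp-integrability} is \eqref{eq:Dperp-integrability-condition}. The only genuinely delicate point, the step I expect to need the most care, is justifying that the induced $\nabla^{*}$ is torsion-free and $h^{*}$ is symmetric, since the paper only states the dual Gauss formula. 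This follows from the tangent/normal splitting of the torsion-freeness of $\widetilde\nabla^{*}$ as indicated above. The kernel hypotheses are needed precisely because $F_\Theta$ (resp.\ $P_\Theta$) might a priori vanish on part of $D_\Theta^{\perp}$ (resp.\ $D_\Theta$).
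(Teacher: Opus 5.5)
Your proposal is correct and follows essentially the same route as the paper: you specialize \eqref{eq:normal-gauge} and \eqref{eq:tangential-gauge} to sections of $D_{\Theta}$ and $D_{\Theta}^{\perp}$, antisymmetrize so that the $h^{*}$-terms cancel, use torsion-freeness of the induced $\nabla^{*}$, and then read off integrability from the kernel hypotheses. The paper uses the symmetry of $h^{*}$ and the torsion-freeness of the induced $\nabla^{*}$ without comment. You derive both from the tangent/normal splitting of $\widetilde\nabla^{*}_XY-\widetilde\nabla^{*}_YX=[X,Y]$, which is a small but welcome addition.
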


\begin{proof}
Let $X,Y\in\Gamma(D_{\Theta})$. Since
$F_{\Theta}X=F_{\Theta}Y=0$ and
$P_{\Theta}X=\Theta X$, $P_{\Theta}Y=\Theta Y$,
equation \eqref{eq:normal-gauge} gives
\[
h(X,\Theta Y)
=
F_{\Theta}(\nabla_X^{*}Y)
+
f_{\Theta}(h^{*}(X,Y)).
\]
Interchanging $X$ and $Y$ and subtracting, the symmetry of $h^{*}$
yields
\[
h(X,\Theta Y)-h(Y,\Theta X)
=
F_{\Theta}
\bigl(
\nabla_X^{*}Y-\nabla_Y^{*}X
\bigr).
\]
Since $\nabla^{*}$ is torsion-free,
\[
\nabla_X^{*}Y-\nabla_Y^{*}X=[X,Y],
\]
and \eqref{eq:D-integrability} follows.

Now let $U,V\in\Gamma(D_{\Theta}^{\perp})$. Then
$P_{\Theta}U=P_{\Theta}V=0$ and
$F_{\Theta}U=\Theta U$, $F_{\Theta}V=\Theta V$.
Equation \eqref{eq:tangential-gauge} gives
\[
-A_{\Theta V}U
=
P_{\Theta}(\nabla_U^{*}V)
+t_{\Theta}(h^{*}(U,V)).
\]
Interchanging $U$ and $V$ and subtracting again eliminates the
symmetric $h^{*}$-term and gives
\[
A_{\Theta U}V-A_{\Theta V}U
=
P_{\Theta}
\bigl(
\nabla_U^{*}V-\nabla_V^{*}U
\bigr).
\]
Torsion-freeness of $\nabla^{*}$ yields
\eqref{eq:Dperp-integrability}.

The final assertions follow because a distribution is integrable
precisely when the bracket of any two of its sections is again a
section of the same distribution. Under the stated kernel assumptions,
\[
F_{\Theta}([X,Y])=0
\quad\Longleftrightarrow\quad
[X,Y]\in D_{\Theta},
\]
and
\[
P_{\Theta}([U,V])=0
\quad\Longleftrightarrow\quad
[U,V]\in D_{\Theta}^{\perp}.
\]
\end{proof}

\begin{remark}\label{rem:kernel-assumptions}
The kernel assumptions in Theorem~\ref{thm:CR-integrability} are
automatic in several standard situations. For example,
\[
\ker(F_{\Theta}|_{TM})=D_{\Theta}
\]
whenever $\Theta$ is injective on $D_{\Theta}^{\perp}$, while
\[
\ker(P_{\Theta}|_{TM})=D_{\Theta}^{\perp}
\]
whenever $\Theta$ is injective on $D_{\Theta}$.
They are included explicitly because a general statistical gauge
tensor may be singular.
\end{remark}

The same formulas also characterize when the two distributions are
autoparallel with respect to the induced dual connection.

\begin{proposition}\label{prop:CR-autoparallel}
Let $M$ be a $\Theta$-CR submanifold.

\begin{enumerate}
\item Assume
\[
\ker(F_{\Theta}|_{TM})=D_{\Theta}.
\]
Then $D_{\Theta}$ is $\nabla^{*}$-autoparallel if and only if
\begin{equation}\label{eq:D-autoparallel}
h(X,\Theta Y)
=
f_{\Theta}(h^{*}(X,Y))
\end{equation}
for all $X,Y\in\Gamma(D_{\Theta})$.

\item Assume
\[
\ker(P_{\Theta}|_{TM})=D_{\Theta}^{\perp}.
\]
Then $D_{\Theta}^{\perp}$ is $\nabla^{*}$-autoparallel if and only if
\begin{equation}\label{eq:Dperp-autoparallel}
A_{\Theta V}U
+
t_{\Theta}(h^{*}(U,V))
=
0
\end{equation}
for all $U,V\in\Gamma(D_{\Theta}^{\perp})$.
\end{enumerate}
\end{proposition}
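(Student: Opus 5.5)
The plan is to specialize the two component identities of Proposition~\ref{prop:submanifold-gauge} to sections of the two distributions, as in the proof of Theorem~\ref{thm:CR-integrability}, but without antisymmetrizing. Recall that a distribution $D$ is $\nabla^{*}$-autoparallel when $\nabla^{*}_XY\in\Gamma(D)$ for all $X,Y\in\Gamma(D)$. Each kernel hypothesis will then turn this membership condition into the vanishing of a single tensor expression.

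For part 1, take $X,Y\in\Gamma(D_{\Theta})$. By \eqref{eq:CR-components} we have $P_{\Theta}Y=\Theta Y$ and $F_{\Theta}Y=0$, so the term $\nabla^{\perp}_X(F_{\Theta}Y)$ in \eqref{eq:normal-gauge} drops out. The normal identity then becomes
\[
h(X,\Theta Y)-f_{\Theta}(h^{*}(X,Y))=F_{\Theta}(\nabla^{*}_XY).
\]
Under the assumption $\ker(F_{\Theta}|_{TM})=D_{\Theta}$, the right-hand side vanishes if and only if $\nabla^{*}_XY\in D_{\Theta}$. Therefore $D_{\Theta}$ is $\nabla^{*}$-autoparallel exactly when \eqref{eq:D-autoparallel} holds for all $X,Y\in\Gamma(D_{\Theta})$.

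For part 2, take $U,V\in\Gamma(D_{\Theta}^{\perp})$ and substitute $X=U$, $Y=V$ in \eqref{eq:tangential-gauge}. By \eqref{eq:CR-components-perp} we have $P_{\Theta}V=0$ and $F_{\Theta}V=\Theta V$, so the tangential identity becomes
\[
-A_{\Theta V}U-t_{\Theta}(h^{*}(U,V))=P_{\Theta}(\nabla^{*}_UV).
\]
Under the assumption $\ker(P_{\Theta}|_{TM})=D_{\Theta}^{\perp}$, the right-hand side vanishes if and only if $\nabla^{*}_UV\in D_{\Theta}^{\perp}$. This gives \eqref{eq:Dperp-autoparallel}.

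No real obstacle arises; the main point is bookkeeping. Both sides of each identity are pointwise evaluations, and the kernel hypothesis must be applied pointwise to the vector $\nabla^{*}_XY$ (respectively $\nabla^{*}_UV$). The remaining care is to check which terms in Proposition~\ref{prop:submanifold-gauge} vanish because of the CR decomposition.
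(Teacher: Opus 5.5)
Your proposal is correct and follows the paper's proof exactly. You specialize the normal identity \eqref{eq:normal-gauge} to $X,Y\in\Gamma(D_{\Theta})$ and the tangential identity \eqref{eq:tangential-gauge} to $U,V\in\Gamma(D_{\Theta}^{\perp})$, isolate $F_{\Theta}(\nabla^{*}_XY)$ and $P_{\Theta}(\nabla^{*}_UV)$, and then apply the kernel hypotheses pointwise.
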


\begin{proof}
For $X,Y\in\Gamma(D_{\Theta})$, equation
\eqref{eq:normal-gauge} reduces to
\[
F_{\Theta}(\nabla_X^{*}Y)
=
h(X,\Theta Y)
-
f_{\Theta}(h^{*}(X,Y)).
\]
Under the first kernel assumption,
$\nabla_X^{*}Y\in D_{\Theta}$ if and only if the left-hand side
vanishes, proving the first assertion.

For $U,V\in\Gamma(D_{\Theta}^{\perp})$,
equation \eqref{eq:tangential-gauge} becomes
\[
P_{\Theta}(\nabla_U^{*}V)
=
-A_{\Theta V}U
-t_{\Theta}(h^{*}(U,V)).
\]
The second kernel assumption gives the desired equivalence.
\end{proof}
\section{Examples}\label{sec:examples}

We conclude with two examples illustrating the intrinsic and extrinsic
constructions developed above. The first gives a family of
gauge-compatible statistical structures on an oriented Riemannian
surface. The second gives a proper $\Theta$-CR submanifold which is
not totally geodesic.

\subsection{A family of gauge-compatible structures on Riemannian surfaces}

Let $(M^2,g)$ be an oriented Riemannian surface and let
$\Omega\subset M$ be an open subset admitting a positively oriented
orthonormal frame $\{U,V\}$. Denote by
$u$ and $v$ the dual $1$-forms:
\[
u(X)=g(X,U), \qquad v(X)=g(X,V).
\]
Define the symmetric bilinear forms
\[
S_{-}(X,Y)
   =u(X)u(Y)-v(X)v(Y),
\]
and
\[
S_{+}(X,Y)
   =u(X)v(Y)+v(X)u(Y).
\]

Let $A,B\in C^\infty(\Omega)$  and define a $(1,2)$-tensor $K$ by
\begin{align}
K_XY
&=
\bigl(A S_{-}(X,Y)+B S_{+}(X,Y)\bigr)U
\nonumber\\
&\quad+
\bigl(B S_{-}(X,Y)-A S_{+}(X,Y)\bigr)V .
\label{eq:surface-K}
\end{align}
Since $S_{-}$ and $S_{+}$ are symmetric, we have
\[
K_XY=K_YX.
\]
Moreover, the cubic form
\[
C(X,Y,Z)=g(K_XY,Z)
\]
is totally symmetric. Hence
\[
\nabla=\nabla^{g}+K,
\qquad
\nabla^{*}=\nabla^{g}-K
\]
define dual statistical connections on $(\Omega,g)$.

Define an endomorphism $\Theta$ of $T\Omega$ by
\begin{equation}
\Theta Y=u(Y)V-v(Y)U.
\label{eq:surface-Theta}
\end{equation}
Thus
\[
\Theta U=V,
\qquad
\Theta V=-U,
\qquad
\Theta^{2}=-I.
\]
Since $\Theta$ is the complex structure determined by the metric and
the orientation of the surface, it is parallel with respect to the
Levi-Civita connection:
\[
\nabla^{g}\Theta=0.
\]

Furthermore,
\[
u(\Theta Y)=-v(Y),
\qquad
v(\Theta Y)=u(Y),
\]
and therefore
\[
S_{-}(X,\Theta Y)=-S_{+}(X,Y),
\qquad
S_{+}(X,\Theta Y)=S_{-}(X,Y).
\]
Using \eqref{eq:surface-K}, we obtain
\begin{align*}
K_X(\Theta Y)
&=
\bigl(-A S_{+}(X,Y)+B S_{-}(X,Y)\bigr)U
\\
&\quad+
\bigl(-B S_{+}(X,Y)-A S_{-}(X,Y)\bigr)V,
\end{align*}
whereas
\begin{align*}
\Theta(K_XY)
&=
-\bigl(B S_{-}(X,Y)-A S_{+}(X,Y)\bigr)U
\\
&\quad+
\bigl(A S_{-}(X,Y)+B S_{+}(X,Y)\bigr)V.
\end{align*}
Consequently,
\[
K_X\Theta=-\Theta K_X.
\]
By Proposition~\ref{prop:K-characterization}, $\Theta$ satisfies the
gauge equation
\[
\nabla_X(\Theta Y)=\Theta(\nabla^{*}_X Y).
\]
Thus $(\Omega,g,\nabla,\nabla^*,\Theta)$ is a
$\Theta$-statistical manifold. Hence, locally on every oriented Riemannian surface, the construction
gives a two-function family of gauge-compatible statistical
structures, parametrized by arbitrary smooth functions $A$ and $B$.
The resulting statistical structure is nontrivial whenever $A$ and
$B$ are not both identically zero.

\subsection{A non-totally-geodesic proper $\Theta$-CR submanifold}

Let $\widetilde M=\mathbb{R}^{4}$ be endowed with its Euclidean metric
and the standard complex structure $J$ defined by
\[
Je_{1}=e_{2},\qquad Je_{2}=-e_{1},
\qquad
Je_{3}=e_{4},\qquad Je_{4}=-e_{3}.
\]
Let $a,b\in\mathbb{R}$ with $(a,b)\neq(0,0)$ and define the statistical difference tensor
$\widetilde K$ by
\begin{align*}
\widetilde K_{e_{1}}e_{1}
   &=a e_{1}+b e_{2},\\
\widetilde K_{e_{1}}e_{2}
   &=\widetilde K_{e_{2}}e_{1}
     =b e_{1}-a e_{2},\\
\widetilde K_{e_{2}}e_{2}
   &=-a e_{1}-b e_{2},
\end{align*}
and
\[
\widetilde K_XY=0
\]
whenever at least one of $X,Y$ belongs to
$\operatorname{span}\{e_{3},e_{4}\}$.

The tensor $\widetilde K$ is symmetric, its associated cubic form is
totally symmetric, and
\[
\widetilde K_XJ=-J\widetilde K_X.
\]
Since $J$ is parallel with respect to the Euclidean Levi-Civita
connection, the connections
\[
\widetilde{\nabla}
   =\widetilde{\nabla}^{g}+\widetilde K,
\qquad
\widetilde{\nabla}^{*}
   =\widetilde{\nabla}^{g}-\widetilde K
\]
make
\[
(\widetilde M,\widetilde g,
 \widetilde{\nabla},\widetilde{\nabla}^{*},J)
\]
a $J$-statistical manifold.

Fix $r>0$ and consider the immersion
\[
\psi:\mathbb{R}^{2}\times S^{1}\longrightarrow\mathbb{R}^{4},
\qquad
\psi(u,v,t)
   =(u,v,r\cos t,r\sin t).
\]
Along $M=\psi(\mathbb{R}^{2}\times S^{1})$, set
\[
E_{1}=e_{1},
\qquad
E_{2}=e_{2},
\qquad
U=-\sin t\,e_{3}+\cos t\,e_{4}.
\]
Then $\{E_{1},E_{2},U\}$ is a local orthonormal tangent frame. A unit
normal vector field is
\[
N=\cos t\,e_{3}+\sin t\,e_{4}.
\]
The complex structure satisfies
\[
JE_{1}=E_{2},
\qquad
JE_{2}=-E_{1},
\qquad
JU=-N,
\qquad
JN=U.
\]
Hence
\[
D_{J}=\operatorname{span}\{E_{1},E_{2}\},
\qquad
D_{J}^{\perp}=\operatorname{span}\{U\}
\]
give an orthogonal decomposition
\[
TM=D_{J}\oplus D_{J}^{\perp},
\]
with
\[
J(D_{J})=D_{J},
\qquad
J(D_{J}^{\perp})\subset T^{\perp}M.
\]
Thus $M$ is a proper $J$-CR submanifold.

Moreover,
\[
\ker(P_J|_{TM})
   =D_J^{\perp}
   =\operatorname{span}\{U\},
\]
since $P_J$ restricts to the complex structure on
$D_J=\operatorname{span}\{E_1,E_2\}$. Thus the kernel assumption
in Proposition~3.6(2) is satisfied.

Since $\widetilde K$ vanishes whenever one of its arguments belongs to
$\operatorname{span}\{e_{3},e_{4}\}$, the statistical and Riemannian
second fundamental forms agree in the $U$-direction. A direct
calculation gives
\[
\widetilde\nabla^{g}_{U}U=-\frac{1}{r}N,
\]
and consequently
\begin{equation}
h(U,U)=h^{*}(U,U)=-\frac{1}{r}N.
\label{eq:cylinder-second-fundamental}
\end{equation}
In particular,
\[
h(U,U)\neq0,
\]
so $M$ is not totally geodesic.
Since the tangential component of
$\widetilde\nabla^{*}_{U}U$ vanishes, we also have
\[
\nabla^{*}_{U}U=0.
\]
Hence the one-dimensional distribution $D_J^{\perp}$ is
$\nabla^{*}$-autoparallel.

The Weingarten formula also gives
\[
\widetilde\nabla_{U}N=\frac{1}{r}U,
\]
and hence
\[
A_{N}U=-\frac{1}{r}U.
\]
Since
\[
F_{J}U=JU=-N,
\]
we have
\[
A_{F_{J}U}U
   =A_{-N}U
   =\frac{1}{r}U.
\]
Moreover, from $JN=U$,
\[
t_{J}\bigl(h^{*}(U,U)\bigr)
   =
t_{J}\left(-\frac{1}{r}N\right)
   =
-\frac{1}{r}U.
\]
Therefore
\[
A_{F_{J}U}U
+
t_{J}\bigl(h^{*}(U,U)\bigr)
=
\frac{1}{r}U-\frac{1}{r}U
=0.
\]
This is precisely the identity in Proposition~\ref{prop:CR-autoparallel}
for the anti-invariant distribution. Notice that the two terms are
individually nonzero. Thus the gauge-compatible submanifold identity
is realized through a genuine cancellation between the shape
operator and the dual second fundamental form, rather than through
total geodesicity.

\end{document}